\DeclarePairedDelimiter{\floor}{\lfloor}{\rfloor}
\DeclarePairedDelimiter{\abs}{|}{|}
\newtheorem{theorem}{Theorem}[section]
\newtheorem{corollary}[theorem]{Corollary}
\newtheorem{lemma}[theorem]{Lemma}
\newtheorem{computeraided}[theorem]{Computer Aided Theorem}
\newtheorem{proposition}[theorem]{Proposition}
\newcommand{\Z}{\mathbb{Z}}
\DeclareMathOperator{\Div}{Div}
\DeclareMathOperator{\outdeg}{outdeg}
\DeclareMathOperator{\val}{val}
\DeclareMathOperator{\gon}{gon}
\DeclareMathOperator{\scw}{scw}
\DeclareMathOperator{\ci}{Ci}
\DeclareMathOperator{\supp}{supp}
\DeclareMathOperator{\sn}{sn}
\title{The gonality of circulant graphs}
\author[Cenek]{Lisa Cenek}
\author[Ferguson]{Lizzie Ferguson}
\author[Gebre]{Eyobel Gebre}
\author[Marcussen]{Cassandra Marcussen}
\author[Meintjes]{Jason Meintjes}
\author[Morrison]{Ralph Morrison}
\author[Ostermeyer]{Liz Ostermeyer}
\author[Ramakrishna]{Shefali Ramakrishna}
\date{}
\begin{document}

\begin{abstract}
    The gonality of a graph measures how difficult it is to move chips around the entirety of a graph according to certain chip-firing rules without introducing debt.  In this paper we study the gonality of circulant graphs, a class of vertex-transitive graphs that can be specified by their number of vertices together with a list of cyclic adjacency relations satisfied by all vertices.  We provide a universal upper bound on the gonality of all circulant graphs with a fixed adjacency list, which holds irrespective of the number of vertices.  We use this upper bound together with computational methods to determine that the gonality of the \(4\)-regular Harary graph on \(n\) vertices is \(10\) for \(n\geq 16\).  As a special case, this gives the gonality of sufficiently large antiprism graphs to be \(10\).
\end{abstract}

\maketitle

\section{Introduction}
In their seminal paper \cite{BAKER2007766}, Baker and Norine developed a theory of chip-firing games on graphs in parallel to divisor theory on algebraic curves.  This allowed for graph-theoretic analogs of algebro-geometric notions to be defined, such as the \emph{gonality} of a graph, first introduced in \cite{baker}. Roughly speaking the gonality of a graph is the minimum number of chips one can place on a graph so that no matter where a unit of debt is placed on the graph, one can eliminate all debt through chip-firing moves.

In this paper we study the gonality of \emph{circulant graphs}. Given an integer $n$ and an \emph{adjacency list} $J=\{j_1,j_2,\ldots,j_k\} \subset 
\Z^{+}$, the \emph{circulant graph} $\ci_n(J)$ on vertices $v_1, v_2, \ldots, v_{n}$ has vertex $v_i$ adjacent to vertex $v_m$ if and only if $\abs{i-m}\mod{n} = j_\alpha$ for some $j_\alpha\in J$. Circulant graphs form a broad family encompassing many familiar examples: cycle graphs, complete graphs, and Harary graphs, to name a few.  Our main result is the following upper bound on the gonality of circulant graphs, which relies only on \(J\) and not on \(n\) (for sufficiently large \(n\)).

\begin{theorem}\label{theorem:universal_upper_bound}
Let $J = \{j_1,j_2,\ldots,j_k\} \subset \mathbb{Z}^+$ with $1 \leq j_1 < j_2 < \ldots < j_k$, and let $n$ be a positive integer such that \(\ci_n(J)\) is connected.  We have
\[
\gon(\ci_n(J)) \leq 2 \sum_{\alpha} j_\alpha^2.
\]
\end{theorem}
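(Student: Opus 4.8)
The plan is to prove the bound by exhibiting an explicit effective divisor $D$ on $\ci_n(J)$ of degree exactly $2\sum_\alpha j_\alpha^2$ and showing that $D$ has positive rank; since $\gon(\ci_n(J))$ is the minimum degree of a divisor of positive rank, this yields the stated inequality. Throughout I index the vertices cyclically by $\Z/n\Z$ and exploit the translation symmetry $v_i \mapsto v_{i+1}$ of the circulant.

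First I would pin down the correct local profile by studying how chips flow across a single cut. Consider firing the indicator $\mathbf 1_A$ of an arc $A = \{v_0, \dots, v_r\}$. A direct count shows that across the cut between $v_r$ and $v_{r+1}$, an interior vertex $v_{r-s}$ loses $c_s := \abs{\{\alpha : j_\alpha > s\}}$ chips while an exterior vertex $v_{r+m}$ gains $c_{m-1}$ chips. This motivates the profile function $\tau(i) := \sum_\alpha \max(j_\alpha - \abs i,\, 0)$ and the ``tent'' divisor centered at $c$, defined by $\tau_c(v_{c+i}) := \tau(i)$, which is supported on $2j_k - 1$ consecutive vertices and has degree $\sum_i \tau(i) = \sum_\alpha j_\alpha^2$ (for each $\alpha$, the contribution telescopes to $j_\alpha^2$). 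I would then set $D := \tau_0 + \tau_r$ for some $r$ placing the two tents far apart, so that $D \ge 0$ and $\deg D = 2\sum_\alpha j_\alpha^2$.

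The crux is a single verification: firing $\mathbf 1_A$ with $A = \{v_0, \dots, v_r\}$ turns $D$ into $\tau_{-1} + \tau_{r+1}$, sliding each tent one step ``outward.'' I would check that the net change at $v_{r+x}$ equals $\tau(x-1) - \tau(x)$, matching the loss/gain counts $-c_s$ and $+c_{m-1}$ above; this reduces to the telescoping identity $\max(j_\alpha - \abs{x-1}, 0) - \max(j_\alpha - \abs x, 0) = \pm\abs{\{\alpha : j_\alpha \ge \abs x\}}$, which is exactly $\mp c_{\abs x}$ or $c_{x-1}$. Because firing the complementary arc $A^c$ produces the negative of this move (as $\mathbf 1_{A^c} = \mathbf 1 - \mathbf 1_A$ and firing all vertices is trivial), I can slide the two tents either apart or together. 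Iterating yields a family of effective divisors $\tau_a + \tau_b$, all linearly equivalent to $D$, whose supports sweep every vertex of the cycle; hence for each $q$ some equivalent effective divisor carries a chip on $q$, so $r(D) \ge 1$.

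The main obstacle is making the single-cut computation rigorous uniformly in $n$: the clean ``tent out, tent in'' behavior relies on the two boundary regions of $A$ not interacting and on no vertex being adjacent to $A$ from both sides, which can fail once $n$ is small relative to $j_k$ or once shifted tents begin to overlap. I would dispatch overlaps by noting that a sum of tents is still effective, and treat the genuinely small cases (where wraparound corrupts the flow counts $c_s$) separately, using the connectivity hypothesis on $n$. The remaining steps—evaluating $\deg\tau$, confirming $D \ge 0$, and checking that the swept supports cover all of $\Z/n\Z$—are routine.
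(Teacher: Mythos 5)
Your proposal is correct and follows essentially the same route as the paper: the paper's divisor is exactly your sum of two ``tent'' profiles $\sum_\alpha \max(j_\alpha - \abs{i},0)$ of degree $\sum_\alpha j_\alpha^2$ each, and its positive-rank argument is the same arc-firing computation showing that firing a growing arc containing both tents translates them one step outward, sweeping every vertex. The telescoping degree count and the cut-crossing chip counts you identify match the paper's verification step for step.
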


A  corollary is that for any fixed \(J\), the set \(\{\gon(\ci_n(J))\,|\,n\in\mathbb{Z}\}\) is bounded.  However, as gonality can behave unpredictably, it does not immediately follow that the gonality of \(\ci_n(J)\) eventually stabilizes as \(n\) increases.  For the \(2k\)-regular \emph{Harary graphs} \(H_{2k,n}:=\ci_n(\{1,2,\ldots,k\})\), we can prove stabilization occurs.

\begin{theorem}
     For every \(k\in\mathbb{Z}^+\), there exists \(n_k\) such that for all \(n\geq n_k\), we have
     \[\gon(H_{2k,n})=\gon(H_{2k,n_k}).\]
\end{theorem}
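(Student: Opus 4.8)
\section*{Proof proposal}

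The plan is to deduce the result from two ingredients: a uniform upper bound, which is already available, and a monotonicity statement, which is where the real work lies. First I would record that Theorem~\ref{theorem:universal_upper_bound}, applied to the adjacency list $J=\{1,2,\ldots,k\}$, gives
\[
\gon(H_{2k,n}) \le 2\sum_{\alpha=1}^{k}\alpha^2 = \frac{k(k+1)(2k+1)}{3}
\]
for every $n\ge 2$ (connectivity holds since $1\in J$). Thus, as $n$ varies over a fixed $k$, the quantity $\gon(H_{2k,n})$ takes values in the finite set $\{1,2,\ldots,k(k+1)(2k+1)/3\}$. Since a bounded sequence of integers need not stabilize, the crux is to rule out oscillation, and my plan is to do this by proving that the sequence is \emph{eventually non-decreasing}: once $\bigl(\gon(H_{2k,n})\bigr)_n$ is known to be bounded and eventually non-decreasing, it is eventually constant, and any index at or beyond which it attains its maximum serves as the desired $n_k$.

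The main step is therefore to establish $\gon(H_{2k,n}) \le \gon(H_{2k,n+1})$ for all sufficiently large $n$, which I would approach by transporting divisors between consecutive sizes. Given an effective divisor $D'$ of positive rank and minimum degree on $H_{2k,n+1}$, I would produce an effective divisor $D$ on $H_{2k,n}$ with $\deg D \le \deg D'$ that still has positive rank. Concretely, I would single out one ``column'' $v^\ast$ of $H_{2k,n+1}$, use the theory of $v^\ast$-reduced divisors together with the rotational symmetry of $H_{2k,n+1}$ to replace $D'$ by an equivalent representative supported away from $v^\ast$ and, if possible, concentrated in a band of consecutive vertices whose width depends only on $k$, and then read that band off as a divisor on $H_{2k,n}$ obtained by deleting $v^\ast$ and reconnecting the two sides of the gap. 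To check that positive rank survives, I would run Dhar's burning algorithm from an arbitrary target $w\in V(H_{2k,n})$ and argue that the firing sequences witnessing positive rank of $D'$ on $H_{2k,n+1}$ can be \emph{replayed} on $H_{2k,n}$; the governing intuition, which is also what powers Theorem~\ref{theorem:universal_upper_bound}, is that a bounded band of chips can be ``rolled'' around the cycle by successive set-firings and thereby reach every vertex no matter how large $n$ is, so that each elementary move is local and $n$-independent.

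The hard part will be the $O(k^2)$ edges near the \emph{seam} where $H_{2k,n}$ and the vertex-deleted copy of $H_{2k,n+1}$ genuinely disagree: excising a single column does not return $H_{2k,n}$ on the nose, since a bounded number of long ``wrap-around'' adjacencies are created or destroyed there. I expect the real content to be a proof that this bounded boundary discrepancy cannot cost any chips---equivalently, that an optimal positive-rank divisor, together with all of the firing sets used to service distant targets, can be chosen to stay clear of the seam, so that the seam never interacts with the transport. (An additive comparison that merely bounds the error by a constant would not suffice here, since only an \emph{exact} monotonicity inequality forces constancy rather than mere boundedness.)

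Once monotonicity is in hand the theorem is immediate: $\bigl(\gon(H_{2k,n})\bigr)_n$ is a bounded, eventually non-decreasing sequence of positive integers, hence eventually constant, and we take $n_k$ to be any index at which its eventual value is first attained. I would also note that this is exactly the mechanism that upgrades a finite computation to an infinite family in the antiprism case: for $k=2$ the upper bound equals $2(1^2+2^2)=10$, so verifying $\gon(H_{4,n_2})=10$ for a single $n_2$ and invoking monotonicity pins the value to $10$ for all $n\ge n_2$.
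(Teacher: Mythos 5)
Your overall architecture matches the paper's exactly: combine the uniform upper bound of Theorem~\ref{theorem:universal_upper_bound} with a monotonicity statement $\gon(H_{2k,n})\le\gon(H_{2k,n+1})$, then observe that a bounded, eventually non-decreasing integer sequence is eventually constant. The problem is that the monotonicity step --- which you correctly identify as ``where the real work lies'' --- is not actually proved in your proposal. You describe a transport of an optimal divisor from $H_{2k,n+1}$ to $H_{2k,n}$ by deleting a column and replaying firing scripts, and then you explicitly flag the seam (the $O(k^2)$ wrap-around adjacencies created or destroyed by the deletion) as the hard part, resolving it only with the hope that ``an optimal positive-rank divisor, together with all of the firing sets used to service distant targets, can be chosen to stay clear of the seam.'' That hope cannot be realized as stated: to certify positive rank you must service \emph{every} target vertex, including those at or adjacent to the seam, and the firing sets that move chips onto such targets necessarily contain seam vertices. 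So the one genuinely difficult inequality is asserted rather than established, and the argument as written does not close.

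The paper's resolution (Theorem~\ref{theorem:gonality_monotone}) is worth contrasting with your sketch, because it sidesteps the seam problem by a different choice of what to delete rather than by avoiding the seam. Using vertex-transitivity, one picks the deleted vertex $v$ to be a vertex requiring the \emph{most} legal subset-firing moves to acquire a chip under the optimal divisor $D$. This forces two things: $D(v)=0$ (so no chips are lost in restricting $D$ to the smaller graph), and, for every target $q$, the firing script for $q$ never fires $v$ without firing all of $v$'s neighbors --- otherwise $v$ would have received a chip in fewer moves than $q$, contradicting the choice of $v$. Consequently each neighbor of $v$ fires at least as often as $v$ does, and when the neighbors of $v$ are paired up by new edges in the smaller graph, each such vertex $w$ donates the same number of chips (its valence is unchanged) but receives at least as many from its partner $w'$ as it would have received from $v$. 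Replaying the same scripts therefore yields an effective divisor with a chip on $q$ in the smaller graph. If you want to salvage your approach, this is the missing idea: the comparison is made to work not by keeping the action away from the modified region, but by choosing the modified region so that the original scripts automatically dominate there.
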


Note that this result is not effective, in that we do not know the value of \(n_k\) or of \(\gon(H_{2k},n_k)\).  For the case of \(k=2\), we make our result effective.

\begin{theorem} We have
\begin{eqnarray}
    \begin{aligned}
        \gon(H_{4,n}) = & 
        \begin{cases}
        \displaystyle \floor*{\frac{n}{4}} + \floor*{\frac{n+1}{4}} + 2,  & n < 16 \\
        10,                                                 & n \geq 16
        \end{cases}
    \end{aligned}
\end{eqnarray} 
\end{theorem}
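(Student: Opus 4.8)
The plan is to recognize that the claimed value is exactly $\gon(H_{4,n}) = \min\{f(n),\,10\}$, where $f(n) := \floor{n/4} + \floor{(n+1)/4} + 2$, since $f(n) < 10$ precisely when $n < 16$ while $f(16) = 10$. Accordingly I would prove the two inequalities $\gon(H_{4,n}) \leq \min\{f(n),10\}$ and $\gon(H_{4,n}) \geq \min\{f(n),10\}$ separately, treating the regimes $n < 16$ and $n \geq 16$ in parallel. Throughout I assume $n \geq 5$ so that $H_{4,n} = \ci_n(\{1,2\})$ is a simple $4$-regular graph; the finitely many tiny cases can be checked directly.

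For the upper bound, the constant $10$ is immediate: applying Theorem~\ref{theorem:universal_upper_bound} with $J=\{1,2\}$ gives $\gon(\ci_n(\{1,2\})) \leq 2(1^2+2^2)=10$ for every $n$ with $\ci_n(\{1,2\})$ connected. To obtain the bound $f(n)$, which is the binding one when $n<16$, I would construct an explicit positive-rank divisor that ``sweeps'' around the cycle. Concretely, label the vertices $v_0,\ldots,v_{n-1}$ and take a divisor supported on a short interval of consecutive vertices, front-loaded so that it carries enough chips to pay for the three edges crossing each of the two ends of the interval as its support is translated by one index. I would verify that this divisor has rank at least $1$ using Dhar's burning algorithm: for each target vertex $q$ one checks that firing the appropriate nested sequence of sets moves a chip onto $q$ while the configuration stays effective, and the total degree works out to $f(n)$.

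The lower bound is the crux, and I would argue by contradiction using $q$-reduced divisors. If some effective divisor $D$ of degree strictly less than $\min\{f(n),10\}$ had positive rank, then for every vertex $q$ the $q$-reduced divisor equivalent to $D$ would place at least one chip on $q$. Fixing a well-chosen $q$ and running Dhar's burning algorithm, the fire spreads as two fronts travelling in opposite directions around the cycle; each time a front advances past one index it must cross $3$ edges, so by tracking the chip deficit forced at each burning front one bounds from below the number of chips that must be present. For $n < 16$ this accounting yields exactly $f(n)$, while for $n \geq 16$ it stabilizes at $10$. The essential point making the large-$n$ bound uniform is that once $n \geq 16$ there is enough room for the two fronts not to meet before the obstruction is exposed, so the argument reduces to a local, $n$-independent computation.

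The step I expect to be the main obstacle is precisely this uniform lower bound of $10$ for all $n \geq 16$. The stabilization Theorem guarantees that $\gon(H_{4,n})$ is eventually constant but is ineffective, so it cannot by itself pin down either the stable value or the threshold. My intended remedy is to isolate the finite, $n$-independent local burning configuration described above and verify the key inequality by computer (a Computer Aided Theorem), then feed this into the reduced-divisor argument to conclude $\gon(H_{4,n}) \geq 10$ for every $n \geq 16$ simultaneously; combined with the universal upper bound this gives equality. The remaining cases $5 \leq n \leq 15$ I would settle by a direct computation of gonality, confirming the value $f(n)$ and completing the piecewise formula.
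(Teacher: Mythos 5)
Your upper bounds are fine and match the paper: the constant $10$ comes from Theorem \ref{theorem:universal_upper_bound} with $J=\{1,2\}$, and the small-$n$ values can be certified by explicit divisors or by computer. The genuine gap is in your lower bound for $n\geq 16$. You assert that Dhar's burning algorithm on $H_{4,n}$ reduces to ``a local, $n$-independent computation'' once the two burning fronts have room not to meet, but this locality is exactly what needs proof and is not at all automatic: a hypothetical positive-rank divisor of degree $9$ on $H_{4,n}$ could have support spread arbitrarily around the cycle, its $q$-reduced representatives are global objects, and the paper itself emphasizes that gonality behaves unpredictably under graph operations (e.g.\ subgraphs of a gonality-$3$ graph can have arbitrarily large gonality). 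Nothing in your sketch rules out that some clever nonlocal chip placement of degree $9$ works for some very large $n$, so ``verify a finite local configuration by computer and feed it into the reduced-divisor argument'' is a plan whose central lemma is missing.

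The paper closes this gap with a different and concrete idea you do not have: a monotonicity theorem (Theorem \ref{theorem:gonality_monotone}). For a vertex-transitive simple graph with even valences, deleting a vertex and pairing up its neighbors by new edges cannot increase gonality; applied to $H_{4,n+1}$ this yields $H_{4,n}$, hence $\gon(H_{4,n})\leq \gon(H_{4,n+1})$. Therefore a single computer verification that $\gon(H_{4,16})=10$ propagates to $\gon(H_{4,n})\geq 10$ for all $n\geq 16$, and combined with the universal upper bound gives equality. The proof of the monotonicity theorem is itself a careful argument about transporting firing scripts from $G$ to the smaller graph $H$, choosing the deleted vertex to be one that is hardest to reach so that its neighbors always fire at least as often as it does. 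You would need to supply this lemma (or some genuinely uniform-in-$n$ lower bound) before your outline becomes a proof; as written, the $n\geq 16$ lower bound does not go through.
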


We structure our paper as follows. In Section \ref{section:background} we cover the necessary background material, useful lemmas, and formally introduce divisor theory as it pertains to gonality. In Section \ref{section:circulantbound} we prove the main theorem and resulting corollary providing the upper bound. In Section \ref{section:harary} we apply our results to Harary graphs, and explore the limitations of other methods in Appendix \ref{appendix}.

\medskip
\noindent \textbf{Acknowledgements.} The authors thank Williams College and the SMALL REU for their support, as well as the NSF for support via grants DMS-1659037 and DMS-2011743.  The authors also thank Thor Gabrielsen and S\o ren Newman-Taylor for helpful discussions on their work on line-like graphs, which provides an independent method of proof for a result similar to our Theorem \ref{theorem:universal_upper_bound}; and Noam Pasman, for helpful comments on an earlier draft.

\section{Background and definitions}\label{section:background}

 Throughout this paper, we let $G=(V,E)$ be an undirected, connected, loopless multigraph with finite vertex set $V(G)$ and finite multiset $E(G)$ of edges. The \emph{valence} of a vertex $v$, denoted by $\val(v)$, is the number of edges incident to it\footnote{Valence is also known as \emph{degree} in graph theory; however, degree has a different meaning in the context of chip-firing games.}. For subsets of vertices $A,B \subset V(G)$, we denote by $E(A,B)$ the multiset of edges with one end in $A$ and the other in $B$. Thus, $\val(v) = \abs{E(\{v\}, V(G)\setminus\{v\})}$. When context makes it clear we will refer to the edges between vertices by $E(u,v) = E(\{u\},\{v\})$ for simplicity. For a subset $S \subset V(G)$ and $v \in S$, the \emph{outdegree of $v$}, $\outdeg_S(v)$, is the number of edges leaving $S$ from $v$, i.e. $\abs{E(v, S^C)}.$ Further, we denote by $G[S]$ the subgraph \emph{induced} by  vertices $S \subset V(G)$ with edge set $E(S,S)$. If, for $S \subset V(G)$, no two vertices are adjacent we call $S$ an \emph{independent set}. The cardinality of the largest such set is the \emph{independence number $\alpha(G)$} of $G$.

The primary graphs of focus in this paper are the \emph{circulant graphs}.  Let $n$ be a positive integer and $J=\{j_1,\ldots,j_k\}\subset \{1,2,\ldots,\lfloor n/2\rfloor\}$ be a set of integers with $j_1<j_2<\cdots<j_k$.  The circulant graph $\textrm{Ci}_n(J)$ has vertex set $\{v_1,\ldots,v_n\}$ with $v_i$ adjacent to $v_m$ if and only if $|i-m|\mod n=j_\alpha$ for some $j_\alpha\in J$.  Many familiar graphs have such a structure; for instance, the cycle graph $C_n$ is $\textrm{Ci}_n(\{1\})$, and the complete graph $K_n$ is $\textrm{Ci}_n(\{1,2,\ldots,\lfloor n/2\rfloor\})$.

Another family of circulant graphs are the \emph{Harary graphs}. These were first introduced in \cite{harary} to study the maximum connectivity of a graph with a given number of vertices and edges.  For a positive integer $n\geq 3$ and an even integer $2\leq k\leq n-1$, the Harary graph $H_{k,n}$ is the circulant graph $C_n(\{1,2,\ldots,k/2\})$; that is, $H_{k,n}$ consists of $n$ vertices arranged in a circle, with each vertex adjacent to the nearest $k$ vertices.  For $n$ even and $k$ odd, we can also define the Harary graph $H_{k,n}$ to be $C_n(\{1,2,\ldots,\lfloor k/2\rfloor, n/2\})$; in other words, it is the Harary graph $H_{k-1,n}$ with each vertex also adjacent to the opposite vertex in the circle.  Note that $H_{k,n}$ is a $k$-regular graph, meaning Harary graphs give a proof by construction that, as long as $k\leq n-1$ and as long as $k$ and $n$ are not both odd, there exists a $k$-regular graph on $n$ vertices.

The $4$-regular Harary graph $H_{4,11}$ is illustrated on the left in Figure \ref{figure:harary_4_11}, with the Harary graph $H_{4,12}$ illustrated twice on the right.  The rightmost depiction of $H_{4,12}$ has the structure of an antiprism, consisting of two cycles rotated to be out of sync with each vertex on one cycle connected to the nearest two on the other.  Thus the graphs $H_{4,n}$ for $n$ even are sometimes referred to as the antiprism graphs. 

\begin{figure}[hbt]
    \centering
    \includegraphics[scale=0.8]{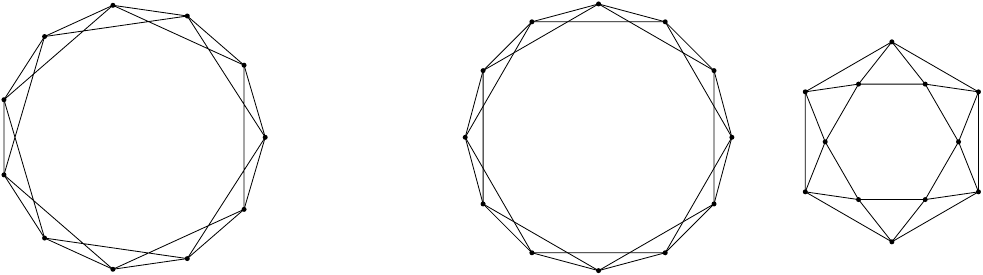}
    \caption{The Harary graph $H_{4,11}$ on the left, and two depictions of the Harary graph $H_{4,12}$ on the right.  The rightmost drawing is the antiprism drawing of $H_{4,12}$.}
    \label{figure:harary_4_11}
\end{figure}

We now present the necessary definitions and results from divisor theory on graphs, mostly following the notation and conventions of \cite{sandpiles}. A \emph{divisor} on graph $G$ is a finite formal sum of the vertices of $G$: \[D=\sum_{v \in V(G)} D(v) \cdot v.\] 
Equivalently, a divisor is an assignment of integers $D(v)$ to the vertices of $G$.  We intuitively describe a divisor as a chip-placement, which places $D(v)$ chips on the vertex $v$ (with negative chips representing debt). The collection of all divisors on a graph $G$ forms the free abelian group $\Div(G)$ under vertex-wise addition, \(\Div(G) \simeq \Z^{\abs{V(G)}}.\)

A divisor $D$ is called \emph{effective} if the number of chips on each vertex is non-negative; that is, if $D(v) \geq 0$ for all vertices $v \in V(G)$. The set of all effective divisors is denoted $\Div^{+}(G)$. The \emph{degree} of a divisor $D$ is the total number of chips on graph $G$: \[\deg(D) = \sum_{v \in V(G)} D(v).\] We denote the set of all divisors of degree $d$ as $\Div_d(G)$ The \emph{support} of an effective divisor $D$ is the set of vertices with nonzero coefficients: \(\supp(D) = \{v \in V(G) \mid D(v) > 0 \}.\)

A \emph{chip-firing move at $v$} transforms a divisor $D$ into a divisor $D'$ by sending chips to the neighbors of $v$, one chip along each edge incident to $v$. More formally, given divisor $D$ and chip-firing vertex $v$, we can write the resulting divisor as \[ D' = \left(D(v) - \val(v)\right)\cdot v + \sum_{u \neq v \in V(G)} \left(D(u) + \abs*{E(u,v)}\right)\cdot u.\] 
If two divisors differ by a sequence of chip-firing moves, we say they are \emph{equivalent}.  Note that the degree of a divisor is preserved under equivalence.

Since the order in which vertices are fired does not affect the final divisor, we may \emph{subset-fire} a subset $S \subseteq V(G)$.  For an example of a subset-firing move, consider the divisor illustrated on the left on $H_{4,11}$ in Figure \ref{figure:harary_subset_fire_example}.  By firing the circled subset of vertices, we obtain the equivalent divisor to the right.  Note that each vertex fired loses precisely as many chips as it has edges leaving the set $S$; and each vertex not fired gains precisely as many chips as it has edges entering the set $S$.  We call a subset-firing move \emph{legal} if it does not  introduce debt on any vertex.  It turns out that if $D$ and $D'$ are equivalent effective divisors, there exists a sequence of legal subset-firing moves to transform $D$ into $D'$ \cite[Corollary 3.11]{db}.  Thus even though individual chip-firing moves might introduce intermediate debt, we can choose subset-firing moves to avoid this as long as our two divisors are both effective.

\begin{figure}[hbt]
    \centering
    \includegraphics{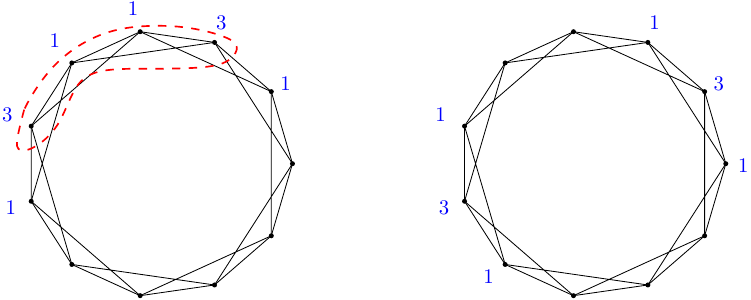}
    \caption{Two equivalent divisors on $H_{4,11}$.  The right is obtained from the left by subset-firing the circled vertices.}
    \label{figure:harary_subset_fire_example}
\end{figure}


We now define the \emph{rank} $r(D)$ of a divisor $D$.  If $D$ is not equivalent to any effective divisor (that is, if we cannot perform chip-firing moves to eliminate the debt present in $D$), we set $r(D)=-1$.  Otherwise, the rank $r(D)$ is the maximum integer $r$ such that, for any effective divisor $E$ of degree $D$, the divisor $D-E$ is equivalent to some effective divisor.  Intuitively, the rank measures how much added debt the divisor $D$ can eliminate, regardless of where that debt is placed.

The \emph{(divisorial) gonality} $\gon(G)$ of a graph $G$ is the minimum degree of a positive rank divisor on $D$.  Equivalently, $\gon(G)$ is the minimum degree of a divisor that can place a chip on any vertex without any other vertex being in debt.

We claim the divisor $D$ in Figure \ref{figure:harary_subset_fire_example} has positive rank.  To see this, note that by firing larger and larger subsets (stretching from one $3$-chip vertex to another), we can effectively translate two copies of the $1-3-1$ arrangement of chips around the graph without introducing debt anywhere, thus letting us place a chip on any vertex.  It follows that $\gon(H_{4,11})\leq \deg(D)=10$.

It turns out this chip-placement is far from optimal on $H_{4,11}$.  A better strategy is to choose an independent set of vertices $S\subset V(G)$ with $|S|=\alpha(G)$ (recall that $\alpha(G)$ is the independence number of $G$), and place a single chip on each vertex in $S^C$.  This yields a positive rank divisor, by the following result.

\begin{proposition}[Theorem 3.1 in \cite{gonality_of_random_graphs}] If $G$ is a simple graph with $n$ vertices, then $\gon(G)\leq n-\alpha(G)$, with a positive rank divisor obtained by placing a chip on each vertex in the complement of a maximum independent set.
\end{proposition}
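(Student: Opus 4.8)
The plan is to show directly that the divisor $D$ placing one chip on each vertex of $S^C$, where $S$ is a maximum independent set, has positive rank; since $\deg(D)=\abs{S^C}=n-\alpha(G)$, this yields the stated bound on $\gon(G)$. Recall that $r(D)\geq 1$ means that for every vertex $v\in V(G)$ there is an effective divisor equivalent to $D$ carrying at least one chip on $v$. I would split the verification into two cases according to whether $v$ lies in $S^C$ or in $S$.

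The first case is immediate: if $v\in S^C$, then $D$ itself satisfies $D(v)=1\geq 1$, so no chip needs to be moved. The content is in the second case, $v\in S$, where I would exhibit a single subset-firing move that relocates a chip onto $v$. The natural candidate is to fire the set $V(G)\setminus\{v\}$. Here is where independence of $S$ does the work: since $v\in S$ and $S$ is independent, every neighbor of $v$ lies in $S^C$, and because $G$ is simple each such neighbor is joined to $v$ by exactly one edge. Thus firing $V(G)\setminus\{v\}$ removes exactly one chip from each neighbor of $v$ (taking it from $1$ to $0$), leaves every other vertex of $S^C$ with its chip, leaves every vertex of $S\setminus\{v\}$ at $0$, and deposits $\val(v)$ chips onto $v$.

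To finish I would check two things. First, that this firing is legal, i.e. introduces no debt: the only vertices losing chips are the neighbors of $v$, each dropping from $1$ to $0$, so the resulting divisor is effective. Second, that it actually places a chip on $v$: since $G$ is connected we have $\val(v)\geq 1$, so $v$ receives at least one chip. The resulting effective divisor is equivalent to $D$ and covers $v$, completing the verification that $r(D)\geq 1$.

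I expect the only real subtlety, and hence the main obstacle, to be ensuring that no debt is created when a chip is routed onto a vertex of $S$; this is precisely what the independence of $S$ guarantees, since all of $v$'s neighbors sit in the chip-bearing complement with exactly one chip each, so firing everything except $v$ empties each of them by exactly one chip without going negative. One could alternatively motivate the choice of the set $V(G)\setminus\{v\}$ by running Dhar's burning algorithm from $v$, which burns only $\{v\}$ and thereby identifies $V(G)\setminus\{v\}$ as a legal set to fire; but the direct check above is self-contained.
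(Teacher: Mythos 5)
Your proof is correct: the paper itself gives no proof of this proposition (it is quoted directly from the cited reference), and your argument — firing $V(G)\setminus\{v\}$ for $v$ in the independent set, using independence and simplicity to see each neighbor drops exactly from $1$ to $0$ — is essentially the standard argument from that source. No gaps.
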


Combined with the following lemma which shows that $\alpha(H_{4,11})=3$, this implies that $\gon(H_{4,11})\leq 8$, an improvement over the upper bound of $10$.

\begin{lemma} For $k=2\ell$ even, the independence number of the Harary graph $H_{k,n}$ is
\[\alpha(H_{k,n})=\left\lfloor \frac{n}{\ell+1}\right\rfloor\]
\end{lemma}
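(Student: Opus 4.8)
The plan is to translate the independence condition into a statement about cyclic distances and then solve the resulting circular packing problem in two directions: an upper bound by a counting argument on the gaps between chosen vertices, and a matching lower bound by an explicit evenly-spaced construction. Throughout I write $\ell = k/2$ and use that $v_i$ and $v_m$ are adjacent in $H_{2\ell,n} = \ci_n(\{1,\ldots,\ell\})$ precisely when their cyclic distance $\min(|i-m|,\,n-|i-m|)$ lies in $\{1,\ldots,\ell\}$. Consequently $S\subseteq V(H_{2\ell,n})$ is independent if and only if every pair of distinct vertices in $S$ has cyclic distance at least $\ell+1$.

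For the upper bound, I would take an independent set $S$ with $|S| = t \geq 1$, list the indices of its vertices in cyclic order, and consider the $t$ consecutive arcs into which they partition the cycle of $n$ positions. Each such arc must have length at least $\ell+1$, since otherwise its two bounding vertices of $S$ would be at cyclic distance at most $\ell$ and hence adjacent. As the arc lengths sum to $n$, this forces $t(\ell+1)\leq n$, so $t\leq \floor*{\frac{n}{\ell+1}}$ and therefore $\alpha(H_{2\ell,n})\leq \floor*{\frac{n}{\ell+1}}$.

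For the matching lower bound, I would set $t=\floor*{\frac{n}{\ell+1}}$ and take the evenly-spaced set $S=\{v_{1+j(\ell+1)} : 0\leq j\leq t-1\}$. Consecutive chosen indices differ by exactly $\ell+1$, and the remaining (wrap-around) arc from the last chosen vertex $v_{1+(t-1)(\ell+1)}$ back to $v_1$ has length $n-(t-1)(\ell+1)$, which is at least $\ell+1$ because $t(\ell+1)\leq n$. Since every elementary arc then has length at least $\ell+1$, the two arcs joining any pair of chosen vertices are each at least $\ell+1$, so their cyclic distance exceeds $\ell$; thus $S$ is independent of size $t$, completing the argument.

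The proof is essentially a one-dimensional circular spacing count, so I do not anticipate a deep obstacle. The only steps requiring genuine care are the verification of the wrap-around gap in the construction and confirming that $t\geq 1$ so that the formula is meaningful; the latter holds since the definition forces $\ell\leq\floor*{\frac{n}{2}}$ and hence $\ell+1\leq n$. I expect this floor-function bookkeeping to be the most delicate part of an otherwise routine argument.
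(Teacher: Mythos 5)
Your proof is correct, and your lower bound is identical to the paper's: the evenly spaced set $\{v_{1+j(\ell+1)}\}$ with a check on the wrap-around gap. For the upper bound you take a slightly different route. The paper uses vertex-transitivity to assume $v_1\in S$, observes that the rest of $S$ lives in $\{v_{1+(\ell+1)},\ldots,v_{n-\ell}\}$, and partitions that range into $\left\lfloor \frac{n}{\ell+1}\right\rfloor-1$ blocks of $\ell+1$ consecutive vertices, each of which is a clique and so meets $S$ at most once. You instead sum the $t$ cyclic gaps between consecutive elements of $S$, each of length at least $\ell+1$, to get $t(\ell+1)\leq n$ directly. The two arguments encode the same packing count; yours is a bit more symmetric and avoids invoking vertex-transitivity, while the paper's clique-partition makes the ``at most one per block'' structure explicit. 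One small point to be careful about in your version: the gap argument as stated needs $t\geq 2$ (with $t=1$ there is no pair of bounding vertices), but that case is covered by your closing observation that $\ell+1\leq n$, so nothing is actually missing.
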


\begin{proof} Note that the set of vertices $S=\{v_1,v_{1+(\ell+1)},v_{1+2(\ell+1)},\ldots,v_{1+s(\ell+1)}\}$ is an independent set, as long as $1+s(\ell+1)\leq n-\ell$.  Equivalently this set is independent for $s+1\leq \frac{n}{\ell+1}$; since $s$ is an integer this is equivalent to $s+1\leq \left\lfloor \frac{n}{\ell+1}\right\rfloor$.  Since $|S|=s+1$, we have $\alpha(H_{k,n})\geq \left\lfloor \frac{n}{\ell+1}\right\rfloor$ by choosing $s$ as large as possible.

For the upper bound, let $S$ be any maximum independent set.  Since $H_{k,n}$ is vertex-transitive, we may assume without loss of generality that $v_1\in S$. This means that $S-\{v_1\}$ can only contain vertices in $\{v_{1+(\ell+1)},\ldots,v_{n-\ell}\}$, since all other vertices are neighbors of $v_1$.  Partition these remaining vertices into intervals of $\ell+1$ integers:  
\[\{v_{1+(\ell+1)},\ldots,v_{2(\ell+1)}\},\{v_{1+2(\ell+1)},\ldots,v_{3(\ell+1)}\},\ldots, \{v_{1+s(\ell+1)},\ldots,v_{n-\ell}\}\]
where $s$ is chosen maximally so that $1+s(\ell+1)\leq n-\ell$, i.e. so that $s+1=\left\lfloor \frac{n}{\ell+1}\right\rfloor$.  There are thus $\left\lfloor \frac{n}{\ell+1}\right\rfloor-1$ of these intervals, and since all vertices in the same interval are adjacent to one another, each interval has at most one element from $S$.  Combined with $v_1$, this gives us that $\alpha(G)=|S|\leq\left\lfloor \frac{n}{\ell+1}\right\rfloor$, completing the proof.
\end{proof}

Although the degree $10$ divisor from Figure \ref{figure:harary_subset_fire_example} was not optimal for $H_{4,11}$, the remarkable thing about it is that a similar chip placement will have positive rank on \emph{any} Harary graph $H_{4,n}$, since the subset-firing strategy can be repeated as many times as necessary.  It follows that $\gon(H_{4,n})\leq 10$ for all $n$.  Indeed, this is the content of Theorem \ref{theorem:universal_upper_bound} in the special case of $4$-regular Harary graphs.


\section{Bounding the gonality of circulant graphs}\label{section:circulantbound}

In this section we present a divisor that has positive rank on any circulant graph with adjacency list $J$, whose degree does not depend on the number of vertices $n$.

\begin{theorem}\label{thm:automaton}


Given $J = \{j_1,j_2,\ldots,j_k\} \subset \mathbb{Z}^+$ with $1 \leq j_1 < j_2 < \ldots < j_k$ and an integer $n$, consider the circulant graph $\ci_n(J)$. The divisor $D$ defined by
\begin{eqnarray}
    \begin{aligned}
        D(v_\alpha) = & 
        \begin{cases}
        \displaystyle \sum_{i=1}^{k} \max \left\{0,j_i - \abs*{j_k - \alpha} \right\}  & \alpha \in [1,2j_k - 1] \\[1.25em]
        D(v_{n-\alpha+1}) & \alpha \in [n-2j_k+1, n]      \\
        0 & \text{otherwise}
        \end{cases}
    \end{aligned}
\end{eqnarray} 
has positive rank.
\end{theorem}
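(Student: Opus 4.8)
The plan is to prove $r(D)\ge 1$ directly, by producing for each vertex $w$ an effective divisor equivalent to $D$ that places a chip on $w$; since $D$ is effective this is exactly the statement that its rank is at least $1$. I would work entirely with subset-firing moves, invoking \cite[Corollary 3.11]{db} as quoted above: equivalent effective divisors are joined by legal subset-firings, so a subset-firing is legal exactly when the divisor it outputs is again effective, as only the fired vertices can lose chips. The first step is to read off the structure of $D$. Setting $T_c^{(i)}(v_\alpha)=\max\{0,\,j_i-\abs{c-\alpha}\}$ for the ``tent'' of height and radius $j_i$ centered at $v_c$, the defining formula presents $D$ as the superposition of two peaked blocks, $\sum_i T^{(i)}_{j_k}$ near $v_1$ and $\sum_i T^{(i)}_{n-j_k+1}$ near $v_n$, whose summits sit at $v_{j_k}$ and $v_{n-j_k+1}$ and which are separated by a block of zeros (the ``gap'') once $n$ is large enough to keep the two supports disjoint.

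The engine would be a single translation lemma. Let $S$ be the arc of vertices strictly between the two summits that contains the seam $v_nv_1$, so that the boundary vertices of $S$ are precisely the two summits. I claim that firing $S$ shifts each tent one step into the gap and so returns another sum of two tents, which is automatically effective. This rests on the computation that firing a half-line whose boundary lies at a summit adds to the divisor the negative of the tent's discrete forward difference: a vertex at gap-side distance $d\ge 1$ gains $\abs{\{j\in J:j\ge d\}}$ chips, while a vertex at distance $e\ge 0$ on the other side loses $\abs{\{j\in J:j>e\}}$ chips, and these increments are exactly what transports the profile $\sum_i T^{(i)}$ by one vertex. Legality is built into the tent heights: a fired vertex at distance $e$ from its summit holds $\sum_i\max\{0,j_i-e\}$ chips and must surrender $\abs{\{i:j_i>e\}}$ of them, and since each surviving term contributes at least $1$, no vertex is driven into debt.

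Iterating this lemma marches the two summits toward one another through the gap, so after finitely many firings a summit — always carrying $\sum_i j_i\ge 1$ chips — rests on any prescribed gap vertex, while the vertices inside the two tents and at the seam already carry chips in $D$ itself. This furnishes the required effective representative for every target $w$, giving $r(D)\ge 1$.

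The hard part will be the translation lemma at the seam. There the two dipoles created at the summits overlap, since each reaches the edge $v_nv_1$, and I would have to check that their superposition still collapses to the clean two-tent configuration with no vertex left negative; the reflection symmetry of both $D$ and $S$ through the seam should keep this bookkeeping in hand. I also expect to spend effort pinning down how large $n$ must be for the initial tents to be disjoint and for each firing to act as an undistorted translation, and on the final collision, where the two summits meet in the middle of the gap and the tents fuse into a single taller — but still effective — configuration covering the target vertex.
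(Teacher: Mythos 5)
Your proposal is correct and follows essentially the same route as the paper: the paper likewise fires the arc through the seam $v_nv_1$ whose endpoints are the two summits $v_{j_k}$ and $v_{n-j_k+1}$, verifies via the same counts $|\{i : j_i > e\}|$ and $|\{i : j_i \geq d\}|$ that this set-firing translates each chip block one step into the gap without creating debt, and then iterates with successively larger arcs until a summit lands on the target vertex. The edge cases you flag (the overlap at the seam and the eventual collision of the two blocks) are handled in the paper only by a symmetry remark and an implicit appeal to the fact that overlapping effective blocks only make the firings easier to perform legally, so your plan is, if anything, more explicit about the bookkeeping than the published argument.
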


Note that when $G$ is the $4$-regular Harary graph $H_{4,n}$, this divisor $D$ is precisely the one illustrated on the left in Figure \ref{figure:harary_subset_fire_example}.

\begin{proof}
We will assume $n \geq 4j_k - 3$; otherwise our divisor places a chip on every vertex, and so certainly has positive rank.

    First note that $D(v_\alpha)>0$ for $\alpha\in [1,2j_{k}-1]$, since for all such $\alpha$ we have that $j_k-|j_k-1-\alpha|>0$.  Symmetrically $D(v_\alpha)>0$ for $\alpha \in [n-2j_k+1, n]$.  Thus it remains to show that we can place a chip on any other vertex without debt elsewhere.

    Consider the set $S=\{v_{\alpha}\,|\, 1\leq\alpha\leq j_k \textrm{ or }n-j_k+1\leq \alpha\leq n\}$.  Transform $D$ into a divisor $D'$ by set-firing $S$.  We analyze in several cases the values of $D'(v_\alpha)$. 
    \begin{itemize}
        \item Note that $D(v_1)=1$, and that firing $S$ loses $D(v_1)$ exactly one chip (to $v_{j_k+1}$), so $D'(v_1)=0$.  Similarly, Firing $S$ gains $D(v_{2j_k})$ one chip (from $v_{j_k}$), so $D'(v_{2j_k})=1$.  Note that $D(v_{2j_k-1})=1$, so $D'(v_{2j_k})=D(v_{2j_k-1})$.
        \item  Let $2\leq \alpha\leq j_k$.  The vertex $v_\alpha$ loses $1$ chip for each $i$ such that $\alpha+j_i\geq j_{k}+1$, since those values of $i$ correspond precisely to the edges incident to $v_\alpha$ that are incident to a vertex outside of $S$.  Equivalently, these are the values of $i$ such that $j_i-(j_k-\alpha)\geq 1$. For this range of $\alpha$, this is equivalent to $j_i-|j_k-\alpha|\geq 1$.  In other words, $v_\alpha$ loses $1$ chip for each term that contributes a nonzero amount to
        \[\sum_{i=1}^{k} \max \left\{0,j_i - \abs*{j_k - \alpha} \right\},\]
        meaning that 
        \[D'(v_\alpha)=\sum_{i=1}^{k} \max \left\{0,j_i - \abs*{j_k - \alpha}-1 \right\}=\sum_{i=1}^{k} \max \left\{0,j_i - \abs*{j_k - (\alpha-1)} \right\}=D(v_{\alpha-1}).\]

        \item Let $j_{k}+1\leq \alpha\leq 2j_k-1$.  The vertex $v_\alpha$ gains $1$ chip for each $i$ such that $\alpha-j_i\leq j_k$, since those values of $i$ correspond precisely to the edges incident to $v_\alpha$ that are incident to a vertex within $S$.  Equivalently, these are the values of $i$ such that $j_i-(\alpha-j_k)\geq 1$.  For this range of $\alpha$, this is equivalent to $j_i-|j_k-\alpha|\geq 1$.  In other words, $v_\alpha$ gains $1$ chip for each term that contributes a nonzero amount to
        \[\sum_{i=1}^{k} \max \left\{0,j_i - \abs*{j_k - \alpha} \right\},\]
        meaning that 
        \[D'(v_\alpha)=\sum_{i=1}^{k} \max \left\{0,j_i - \abs*{j_k - \alpha}+1 \right\}=\sum_{i=1}^{k} \max \left\{0,j_i - \abs*{j_k - (\alpha-1)} \right\}=D(v_{\alpha-1}).\]
        \item Symmetric arguments to the above show that $D'(v_n)=0$, and that $D'(v_\alpha)=D(v_{\alpha+1})$ for $\alpha\in [n-2j,n-1]$.
        \item Finally, for all other values of $\alpha$, we have $D'(v_\alpha)=0$, since such vertices had no chips to start, and are adjacent to no vertices within the set $S$.
    \end{itemize}
    In summary, the effect of the set-firing move $S$ is to translate the chips placed on $v_1$ through $v_{2j_k-1}$ one unit clockwise, and the chips placed on $v_n$ through $v_{n-2j_k+1}$ one unit counterclockwise.
    
    This process can be repeated by firing slightly larger sets (adding one vertex clockwise and one unit counterclockwise each time), allowing us to place a chip on any vertex of the graph without intermediate debt being introduced elsewhere.  Thus $D$ has positive rank.
\end{proof}

We are now ready to prove Theorem \ref{theorem:universal_upper_bound}, which gives a universal upper bound of $2 \sum_{\alpha} j_\alpha^2$ for the gonality of $\ci_n(J)$ with $J = \{j_1,j_2,\ldots,j_k\} \subset \mathbb{Z}^+$ with $1 \leq j_1 < j_2 < \ldots < j_k$.


\begin{proof}[Proof of Theorem \ref{theorem:universal_upper_bound}]
Letting $D$ be the positive rank divisor from Theorem \ref{thm:automaton}, we have $\gon(\ci_n(J))\leq \deg(D)$.  For the remainder of the proof we assume $n\geq 4j_k - 3$, as this ensures all vertices appearing in our description of $D$ actually occur; if $n<4j_k-3$, then $\deg(D)$ will be smaller than the formulas below.

By construction, the degree of our divisor is 
\begin{equation}
\begin{split}
    2 \left( 2 \left( \sum_{i=1}^{j_1} i + \sum_{i=1}^{j_2} i + \ldots + \sum_{i=1}^{j_k} i \right) - \sum_{i=1}^{k} j_i  \right)
   & = 2 \left( 2 \left( \sum_{\alpha=1}^{k} \sum_{i=1}^{j_\alpha} i \right) - \sum_{i=1}^{k} j_i  \right)\\
& = 2 \left( 2 \left( \sum_{\alpha=1}^{k} \frac{j_\alpha(j_\alpha + 1)}{2} \right) - \sum_{i=1}^{k} j_i  \right)\\
 & = 2 \left( \sum_{\alpha=1}^{k} j_\alpha(j_\alpha + 1) - \sum_{i=1}^{k} j_i \right)\\
 & = 2 \sum_{\alpha}^k j_\alpha^2. \qedhere
\end{split}
\end{equation}
\end{proof}



\begin{corollary}\label{corollary:harary_even} For $k$ even, the Harary graph $H_{k,n}$ has gonality at most $k(k+1)(k+2)/12$.
\end{corollary}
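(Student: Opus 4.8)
The plan is to apply Theorem \ref{theorem:universal_upper_bound} directly to the Harary graph $H_{k,n}$ and then simplify the resulting sum. Recall that for $k$ even, we have $H_{k,n} = \ci_n(\{1,2,\ldots,k/2\})$, so the adjacency list is $J = \{1,2,\ldots,\ell\}$ where $\ell = k/2$. Since $H_{k,n}$ is connected (each vertex is adjacent to at least its immediate neighbors, so the underlying cycle $\ci_n(\{1\})$ is already a spanning connected subgraph), the hypotheses of Theorem \ref{theorem:universal_upper_bound} are satisfied for every $n$.

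The key step is the computation: Theorem \ref{theorem:universal_upper_bound} gives
\[
\gon(H_{k,n}) \leq 2\sum_{\alpha} j_\alpha^2 = 2\sum_{j=1}^{\ell} j^2.
\]
First I would substitute the standard closed form $\sum_{j=1}^{\ell} j^2 = \frac{\ell(\ell+1)(2\ell+1)}{6}$, which yields an upper bound of $\frac{\ell(\ell+1)(2\ell+1)}{3}$. Then I would re-express everything in terms of $k = 2\ell$, so that $\ell = k/2$, $\ell+1 = (k+2)/2$, and $2\ell+1 = k+1$. Substituting gives
\[
\frac{\ell(\ell+1)(2\ell+1)}{3} = \frac{1}{3}\cdot\frac{k}{2}\cdot\frac{k+2}{2}\cdot(k+1) = \frac{k(k+1)(k+2)}{12},
\]
which is exactly the claimed bound.

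There is no real obstacle here; this is essentially a one-line corollary of the main theorem together with a routine algebraic simplification. The only point requiring a small remark is the verification that $H_{k,n}$ is connected for all relevant $n$, which is needed to invoke Theorem \ref{theorem:universal_upper_bound}; as noted above, this is immediate since $1 \in J$ forces the Hamiltonian cycle $v_1 v_2 \cdots v_n v_1$ to be a subgraph. I would therefore present the proof as a direct substitution, stating the closed form for the sum of squares and then rewriting in terms of $k$, with a one-sentence justification of connectivity.
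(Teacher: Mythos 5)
Your proposal is correct and follows exactly the same route as the paper: apply Theorem \ref{theorem:universal_upper_bound} with $J=\{1,\ldots,k/2\}$, invoke the closed form $\sum_{i=1}^{\ell} i^2 = \ell(\ell+1)(2\ell+1)/6$, and simplify to $k(k+1)(k+2)/12$. Your additional remark on connectivity (via $1\in J$) is a harmless extra check that the paper leaves implicit.
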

\begin{proof}
    Since $H_{k,n}=\ci_n(\{1,\ldots,k/2\})$, we have
\begin{align*}
    \gon(H_{k,n})\,\leq&\, 2\sum_{i=1}^{k/2} i^2\,=\, 2\cdot \frac{(k/2)((k/2)+1)(2(k/2)+1)}{6}\,=\,\frac{k(k+1)(k+2)}{12}. \qedhere
\end{align*}
\end{proof}

Note that plugging in $k=4$ does indeed yield $10$, as expected from our divisor in Figure \ref{figure:harary_subset_fire_example}.  In the next section, we will see how to prove that this value of $10$ is optimal once $H_{4,n}$ has enough vertices.

We can also consider the case of odd Harary graphs $H_{k,n}$ with $k$ odd and $n$ even, which can be written as $H_{k,n}=\ci_n(\{1,\ldots,(k-1)/2,n/2\})$.  For fixed $k$ and varying $n$, this collection of graphs does not fall under the purview of Theorem \ref{theorem:universal_upper_bound}, since the adjacency list depends on the value of $n/2$.  Nonetheless we can find a similar upper bound on gonality for these and similar circular graphs.

\begin{theorem}\label{theorem:n_over_2} Consider for $n$ even the circulant graph $\ci_n(J(n))$, where $J(n)=\{j_1,j_2,\ldots,j_k,n/2\}$.  The gonality of this graph is at most
\[4 \sum_{\alpha} j_\alpha^2.\]
\end{theorem}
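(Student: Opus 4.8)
The plan is to build a positive-rank divisor on $\ci_n(J(n))$ by superimposing two antipodally-placed copies of the divisor $D$ from Theorem \ref{thm:automaton}, exploiting the order-two symmetry $v_i \mapsto v_{i+n/2}$ that the extra jump $n/2$ introduces. Concretely, let $D$ be the positive-rank divisor from Theorem \ref{thm:automaton} for the list $\{j_1,\ldots,j_k\}$, and define
\[
\tilde D(v_\alpha) = D(v_\alpha) + D(v_{\sigma(\alpha)}), \qquad \sigma(\alpha) = ((\alpha + n/2 - 1) \bmod n) + 1,
\]
the sum of $D$ with its antipodal translate. Since $\sigma$ is a bijection of the vertex set, $\deg(\tilde D) = 2\deg(D)$, and the degree computation in the proof of Theorem \ref{theorem:universal_upper_bound} gives $\deg(D) = 2\sum_\alpha j_\alpha^2$, so that $\deg(\tilde D) = 4\sum_\alpha j_\alpha^2$, matching the claimed bound. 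As in Theorem \ref{theorem:universal_upper_bound}, for the small values of $n$ in which the chip clusters fill the whole graph the constructed divisor has every vertex in its support and its degree only drops below the stated bound, so it suffices to treat large $n$.

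The key point is to show that $\tilde D$ has positive rank. Here $\tilde D$ has chips clustered around two antipodal seams, one at $v_n/v_1$ and one at $v_{n/2}/v_{n/2+1}$, and I would translate these clusters around the graph exactly as in the proof of Theorem \ref{thm:automaton}, but firing \emph{antipodally symmetric} sets. That is, if $S$ is one of the growing firing sets used in Theorem \ref{thm:automaton}, I would fire $S' = S \cup \sigma(S)$ instead. The crucial feature of $S'$ is that it is invariant under $\sigma$, so every edge coming from the jump $n/2$ (which joins $v_i$ to $v_{\sigma(i)}$) has either both endpoints inside $S'$ or both outside. Consequently, firing $S'$ produces no net chip flow along any $n/2$ edge, and the new jump contributes nothing to the subset-firing move.

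With the $n/2$ edges neutralized, the only edges that move chips when firing $S'$ are those coming from the small jumps $j_1,\ldots,j_k$. For $n$ large enough that $n/2 > 2j_k$, these short edges never connect one seam to the other, so the firing decouples into two independent copies of the move analyzed in Theorem \ref{thm:automaton}: firing $S$ translates the cluster at the first seam, and firing $\sigma(S)$ translates the cluster at the antipodal seam, each exactly one unit in the appropriate direction and with no debt introduced. Repeating with the growing symmetric sets $S'$ translates both clusters all the way around the graph, so a chip can be delivered to any target vertex; hence $\tilde D$ has positive rank and $\gon(\ci_n(J(n))) \leq \deg(\tilde D) = 4\sum_\alpha j_\alpha^2$.

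The main obstacle is the verification in the previous paragraph that firing $S'$ genuinely reduces to two decoupled instances of Theorem \ref{thm:automaton}. This requires confirming both that antipodal symmetry of $S'$ is preserved as the firing sets grow (immediate, since $S' = S \cup \sigma(S)$ is symmetric by construction) and that the short jumps keep the two seams from interacting, which is where the hypothesis that $n$ is large relative to $j_k$ is used. Once this decoupling is established, the per-seam chip bookkeeping is identical to that already carried out in the proof of Theorem \ref{thm:automaton} and needs no repetition.
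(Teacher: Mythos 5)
Your proposal is correct and takes essentially the same approach as the paper: the published proof also superimposes an antipodally translated copy of the divisor from Theorem \ref{thm:automaton} and fires each strip together with its antipodal mirror image, observing that since $v_i$ is fired precisely when $v_{i+n/2}$ is, no chips cross the $n/2$ edges and the argument reduces to two decoupled copies of the original translation. Your explicit formalization via the involution $\sigma$ and the symmetric sets $S' = S \cup \sigma(S)$ is just a more careful writing of the same idea.
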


\begin{proof} Consider first the graph $\ci_n(J(n)-\{n/2\})$.  Construct the divisor from Theorem \ref{thm:automaton}, which is centered on the vertices $v_1$ and $v_n$.  On $\ci_n(J(n))$, build this same divisor, and then add another copy, this one centered on $v_{(n/2)+1}$ and  $v_{(n/2)}$.  This divisor has degree $4 \sum_{\alpha} j_\alpha^2$.  To see it has positive rank, we follow the same firing script as from Theorem \ref{thm:automaton}, except every time we fired a strip of vertices centered on $v_1$ and $v_n$, we now fire both that and the mirror image of the strip, centered on $v_{(n/2)}+1$ and  $v_{(n/2)-1}$.  Since a vertex $v_i$ is fired precisely when $v_{i+n/2}$ is fired, no chips are transferred from a vertex to its antipodal neighbor.  Thus the divisor spreads chips throughout the graph without introducing debt, meaning it has positive rank.
\end{proof}

This theorem, together with the proof of Corollary \ref{corollary:harary_even}, gives the following.
\begin{corollary}\label{corollary:harary_odd}
    For $k$ odd and $n$ even, the Harary graph $H_{k,n}$ has gonality at most $(k-1)k(k+1)/6$.
\end{corollary}

As a concrete example, consider the $3$-regular Harary graph $H_{5,n}$, illustrated in Figure \ref{figure:5-regular_harary} for $n=20$.  The divisor on the left is the one prescribed in the proof of Theorem \ref{theorem:n_over_2}; the divisor on the right is obtained by firing the circled set of vertices.

\begin{figure}
    \centering
    \includegraphics[scale=0.6]{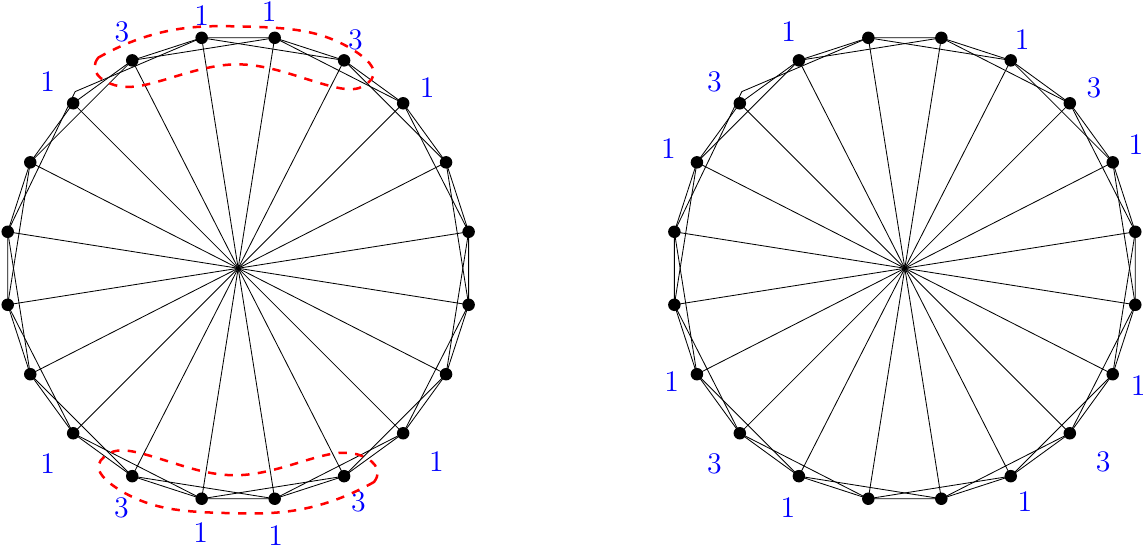}
    \caption{A positive rank divisor on the Harary graph $H_{5,20}$}
    \label{figure:5-regular_harary}
\end{figure}

We close this section with the gonality of $3$-regular Harary graphs.

\begin{theorem}For $n$ even and $n\geq 4$, we have
\begin{eqnarray*}
    \begin{aligned}
        \gon(H_{3,n}) = & 
        \begin{cases}
        3,  & n \leq 6 \\
        4,                                                 & n \geq 8
        \end{cases}
    \end{aligned}
\end{eqnarray*} 
    
\end{theorem}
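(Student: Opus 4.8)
The plan is to prove equality by separately establishing the upper and lower bounds for each range of $n$. For the upper bounds, I would apply Corollary~\ref{corollary:harary_odd} with $k=3$, which gives $\gon(H_{3,n}) \leq (k-1)k(k+1)/6 = 2\cdot 3\cdot 4/6 = 4$. This handles the upper bound of $4$ for the case $n\geq 8$ immediately. For the small cases $n\leq 6$, I would exhibit an explicit positive rank divisor of degree $3$; for instance, since $H_{3,4}$ and $H_{3,6}$ are small, I would either construct a degree $3$ divisor by hand (placing chips so that a chip can be translated to any vertex via legal subset-firing, mimicking the antipodal firing script of Theorem~\ref{theorem:n_over_2}) or verify it directly, noting that $H_{3,4}=K_4$ and $H_{3,6}=K_{3,3}$ or a similar well-understood graph whose gonality is known.

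For the lower bounds, I would use a standard tool for bounding gonality from below. The cleanest approach is the treewidth lower bound $\gon(G)\geq \tw(G)$ (the operator $\tw$ is already declared in the preamble), combined with the fact that any positive rank divisor must have degree at least the minimum outdegree over nontrivial subsets, or more directly the fact that gonality is at least the edge-connectivity for graphs of a certain type. To show $\gon(H_{3,n})\geq 4$ for $n\geq 8$, I would argue that no divisor of degree $3$ can have positive rank: since $H_{3,n}$ is $3$-regular, I would use Dhar's burning algorithm or an outdegree argument to show that for any effective divisor $D$ of degree $3$ and any suitably chosen vertex $q$, the $q$-reduced form of $D$ has no chips on $q$, hence $D$ cannot place a chip on $q$. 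To show $\gon(H_{3,n})\geq 3$ for $n\leq 6$, I would note that the gonality of any connected graph on at least two vertices that is not a tree is at least $2$, and rule out gonality $2$ by checking that no degree $2$ divisor has positive rank (a degree $2$ positive rank divisor forces the graph to be hyperelliptic, which these small $3$-regular graphs are not, as can be checked via treewidth or connectivity $\geq 3$).

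The main obstacle will be the lower bound $\gon(H_{3,n})\geq 4$ for the infinite family $n\geq 8$, since this must hold uniformly in $n$. A degree-$3$ divisor could in principle have positive rank through some clever chip configuration, so I must rule this out carefully. I would organize the argument around Dhar's burning algorithm relative to a target vertex $q$: starting a fire at $q$, I would show that with only $3$ chips distributed anywhere on a $3$-regular graph of girth/structure like $H_{3,n}$ with $n\geq 8$, the fire always burns the entire graph, leaving the $q$-reduced divisor with fewer than $3$ chips available at $q$ (in fact none), so that $q$ cannot receive a chip. The key structural input is that the antipodal edges (the $n/2$ chords) together with the local cycle structure give enough edge-connectivity that three chips cannot block the fire; I expect this requires a short case analysis on how the three chips can be placed relative to the burning front, using $3$-regularity to bound the number of unburnt edges into any vertex. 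Once the fire spreads completely for every placement of $3$ chips and every choice of $q$, positive rank of a degree-$3$ divisor is impossible, giving the desired lower bound.
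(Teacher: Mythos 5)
Your upper bounds follow the paper exactly: Corollary~\ref{corollary:harary_odd} with $k=3$ gives $\gon(H_{3,n})\leq 4$, and for $n\leq 6$ the paper simply identifies $H_{3,4}=K_4$ and $H_{3,6}=K_{3,3}$ and cites that both have gonality $3$, which is one of the routes you name. The divergence is in the lower bound for $n\geq 8$, which is the heart of the theorem. You briefly mention the treewidth bound $\gon(G)\geq\tw(G)$ but then abandon it in favor of Dhar's burning algorithm; the paper does the opposite, and its argument is essentially one line: $H_{3,8}$ is the Wagner graph, one of the four forbidden minors for treewidth at most $3$, hence has treewidth $4$; it is a minor of $H_{3,n}$ for every even $n\geq 8$; treewidth is minor-monotone and is a lower bound on gonality. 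The ingredient you are missing to make the treewidth route work is precisely this identification of $H_{3,8}$ as a forbidden minor for partial $3$-trees, which handles the entire infinite family at once with no case analysis.

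As written, your Dhar-based argument has a concrete gap: the claim that ``the fire always burns the entire graph'' for every placement of $3$ chips is false. For example, if $D$ places all $3$ chips on a single vertex $v$, then $v$ has valence $3$ and never ignites, so the first burning pass does not consume the graph. Dhar's algorithm must then be iterated (fire the unburnt set, burn again, repeat) until the $q$-reduced divisor is reached, and you must show that this reduced divisor has no chip on $q$. That requires tracking the dynamics of the iteration across all placements of $3$ chips and all relevant choices of $q$, uniformly in $n$ --- a substantially longer case analysis than your sketch suggests, and one you have not carried out. The approach can likely be made to work, but in its current form the key step is asserted rather than proved; the paper's minor/treewidth argument avoids all of this.
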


\begin{proof}
    Note that $H_{3,4}$ is the complete graph $K_4$, and $H_{3,6}$ is the complete bipartite graph $K_{3,3}$.  These both have gonality $3$ \cite[Example 4.3]{debruyn2014treewidth}.

    Now assume $n\geq 8$.  We know by Corollary \ref{corollary:harary_odd} that $\gon(H_{3,n})\leq 2\cdot 3\cdot 4/6=4$.  For a lower bound, consider $H_{3,8}$.  This is the Wagner graph, and has treewidth equal to $4$; indeed, it is one of the four minimal forbidden minors for graphs of treewidth at most $3$ \cite{forbidden_minors_3trees}.  Since $H_{3,8}$ is a minor of $H_{3,n}$ for every $n\geq 8$, we know that $H_{3,n}$ has treewidth at least $4$.  Treewidth is a lower bound on graph gonality \cite[Theorem 1.1]{debruyn2014treewidth}, so $\gon(H_{3,n})=4$.
\end{proof}

\section{Harary graphs}\label{section:harary}

Graph gonality can behave very badly under reasonable graph operations.  For instance, as shown in \cite{sparse} there exist graphs of gonality $3$ with subgraphs of arbitrarily high gonality.  Thus we do not know for certain that $\ci_n(J)\leq \ci_{n+1}(J)$.  So as to obtain such a result at least for even-regular Harary graphs, we prove the following theorem.

\begin{theorem}\label{theorem:gonality_monotone}
Let \(G\) be a vertex-transitive simple graph with even valences, and suppose \(H\) is constructed from \(G\) as follows:  delete a vertex \(v\), and pair up the neighbors of \(v\) in any way, adding an edge between each pair.  Then  \(\gon(H)\leq \gon(G)\).
\end{theorem}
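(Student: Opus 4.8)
The plan is to take an optimal positive-rank divisor on $G$ and transport it to $H$, showing that the transported divisor still has positive rank of the same degree. Let $D$ be a positive-rank divisor on $G$ with $\deg(D)=\gon(G)$, and let $v$ be the deleted vertex with neighbors paired up by the new edges. The natural first step is to produce from $D$ a divisor $D_H$ on $H$: I would start by moving the chips off of $v$. The key observation is that since $G$ is vertex-transitive with even valences, there is no parity obstruction, and more importantly $D$ has positive rank, so $D$ is equivalent to an effective divisor $D'$ that places at least one chip on $v$. In fact, because $r(D)\geq 1$, for the effective divisor $E=v$ the divisor $D-v$ is equivalent to an effective divisor; equivalently I can find a representative $D'\sim D$ with $D'(v)\geq 1$.

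The heart of the argument is to understand what happens to chip-firing when we delete $v$ and reconnect its neighbors in pairs. First I would fire the single vertex $v$ starting from $D'$: this sends one chip along each edge incident to $v$, in particular one chip to each neighbor of $v$. After this firing, I would like the chips that $v$ \emph{received back} or \emph{would send} to be accounted for by the new edges in $H$. Concretely, deleting $v$ removes $\val_G(v)$ edges, and adding a perfect matching on the neighbors adds $\val_G(v)/2$ edges (here using that $\val_G(v)$ is even so the pairing is a genuine perfect matching). The plan is to show that a single firing move at $v$ in $G$ corresponds, after deletion, to a firing move of the matched neighbor-set in $H$, so that the chip configuration on $V(G)\setminus\{v\}$ transforms compatibly. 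I would define $D_H$ to be the restriction to $V(H)=V(G)\setminus\{v\}$ of an appropriate representative of $D$, chosen so that $v$ carries no chips, and then argue that equivalences on $G$ descend to equivalences on $H$.

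The cleanest way to package this is to set up a map on divisors and show it respects the firing relation. I would let $D_H(u)=D'(u)$ for all $u\neq v$, where $D'$ is the representative with all chips moved off $v$ (possible since $r(D)\geq 0$ and we can push the $D'(v)$ chips onto neighbors). To verify $r_H(D_H)\geq 1$, I must show that for each target vertex $w\in V(H)$, the divisor $D_H-w$ is equivalent on $H$ to an effective divisor. I would lift this to $G$: since $r(D)\geq 1$, the divisor $D-w$ is equivalent on $G$ to an effective divisor $F$ via some sequence of subset-firing moves, and by \cite[Corollary 3.11]{db} these can be taken legal (no intermediate debt). The task is then to translate each legal subset-firing move on $G$ into a legal subset-firing move on $H$. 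A subset $S\subseteq V(G)$ firing in $G$ changes outdegrees only through edges crossing $\partial S$; the only edges that differ between $G$ and $H$ are the ones at $v$ (removed) and the matching edges (added). I would handle the two cases $v\in S$ and $v\notin S$ separately, showing that the outdegree contribution of the deleted star at $v$ is exactly compensated by the matching edges whenever $v$'s matched neighbors are split across $\partial S$, using the pairing structure.

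The main obstacle I anticipate is precisely this last compatibility check: when I fire a subset $S$ not containing $v$, a matched pair of neighbors of $v$ might straddle the boundary of $S$, so the matching edge contributes to the firing in $H$ even though the corresponding edges through $v$ in $G$ behaved differently. I expect to need to control this by choosing the firing representatives carefully, or equivalently by always keeping $v$ and its chips ``synchronized'' so that in $G$ the vertex $v$ is fired exactly when we want the matching edges to transmit chips. A robust way to finish is to observe that any effective divisor on $H$ lifts to an effective divisor on $G$ by assigning $v$ a suitable nonnegative number of chips, and that the reverse passage sends effective divisors supported off $v$ to effective divisors on $H$; combined with the fact that firing $v$ in $G$ mimics firing the neighbor set (and hence, via the matching, a legal move in $H$), this yields $r_H(D_H)\geq 1$ and therefore $\gon(H)\leq \deg(D_H)\leq\deg(D)=\gon(G)$. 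The vertex-transitivity hypothesis ensures we are free to choose $v$ conveniently, and the even-valence hypothesis is exactly what makes the neighbor-pairing a perfect matching so that the edge count and firing behavior line up.
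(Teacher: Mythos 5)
Your overall strategy---restrict a minimal positive-rank divisor to $V(G)\setminus\{v\}$ and replay the legal subset-firing scripts in $H$---is the same as the paper's, and you correctly isolate the crux: when a matched pair $w,w'$ of neighbors of $v$ interacts differently with the fired sets than the deleted edges $wv$, $w'v$ did, the replayed script could create debt. But you do not resolve this. If a script fires each vertex $u$ a total of $c(u)$ times, the net difference in the final chip count at a neighbor $w$ of $v$ between running the script in $G$ and in $H$ is exactly $c(w')-c(v)$, where $w'$ is $w$'s partner; so what you need is that every neighbor of $v$ fires at least as many times as $v$ does in every script. Your proposed fixes (``choosing the firing representatives carefully,'' keeping $v$ ``synchronized'') gesture at this but supply no mechanism. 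Moreover, the claimed correspondence between firing the single vertex $v$ in $G$ and firing the matched neighbor-set in $H$ is false: the matching edges are internal to that set, so firing it sends nothing along them and does not mimic firing $v$. There is also a small internal inconsistency in your setup: you first ask for a representative with $D'(v)\geq 1$ and later for one with all chips moved off $v$; only the latter is relevant, and moving chips off a vertex is not automatically possible without introducing debt.

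The missing idea is the one place where vertex-transitivity does real work. Since $G$ is vertex-transitive, you are free to take $v$ to be a vertex requiring the \emph{maximum} number of legal subset-firing moves (starting from $D$) to acquire a chip. Then in the script delivering a chip to any target $q$, no fired set can contain $v$ without containing all of $v$'s neighbors: otherwise $v$ would legally lose a chip at that step, so $v$ already held a chip and could have been reached in fewer moves than $q$, contradicting maximality. Hence every neighbor of $v$ fires at least as often as $v$, the quantity $c(w')-c(v)$ above is nonnegative, and replaying the script in $H$ (simply omitting $v$) yields an effective divisor with a chip on $q$. This choice also forces $D(v)=0$, so the restricted divisor has degree $\gon(G)$ and the bound $\gon(H)\leq\gon(G)$ follows. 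Without this selection of $v$, the compatibility check you flag as the main obstacle remains open, so as written the proof has a genuine gap.
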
    

\begin{proof}
Let \(D\) be a positive rank divisor of degree \(\gon(G)\) on \(G\).  For every vertex \(q\in V(G)\), there exists a collection of legal subset firing moves to place a chip onto \(q\) without debt being present elsewhere.  Choose our \(v\) to be a vertex (not necessarily unique) that requires the {most} subset firing moves to put a chip onto it.  This means that for every \(q\), the firing script to place a chip onto \(q\) never fires \(v\) without firing all of its neighbors; otherwise $v$ must have had a chip to lose, and  we could've gotten a chip to \(v\) in fewer moves than to $q$.  Thus, the neighbors of $q$ always fire at least as many times as \(v\).

Let \(D'\) be the restriction of \(D\) to \(H\).  Note that $\deg(D')=\deg(D)$, since $D(v)$ must have been $0$.  Given \(q\in V(H)\), we can consider the firing script \(c\) that would have eliminated debt on \(q\) in the original graph \(G\); follow the exact same firing script, except of course that there is no \(v\) to fire.

Most vertices will end up with the same number of chips:  they fired the same number of times, as did their neighbors (and their set of neighboring vertices did not change).  Suppose \(w\) was a neighbor of \(v\), which is paired with \(w'\).  Then in \(H\), \(w\) donates the exact same number of chips, since it has the same degree; but it actually receives as many or more chips, since \(w'\) fired at least as many times as \(v\).  Thus following the firing script restricted to \(H\) results in a divisor with at least as many chips on every vertex when compared to \(D\) on \(G\).  That means we end up with an effective divisor with at least one chip on \(q\).  Since \(q\) was an arbitrary vertex of \(H\), we have that \(D'\) has positive rank on $H$, so \(\gon(H)\leq \deg (D')=\deg(D)=\gon(G)\).
\end{proof}

\begin{corollary}
For \(k\) even, we have \(\gon(H_{k,n})\leq \gon(H_{k,n+1})\).
\end{corollary}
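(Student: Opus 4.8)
The plan is to realize $H_{k,n}$ as the graph obtained from $H_{k,n+1}$ by exactly the vertex-deletion-and-pairing operation of Theorem \ref{theorem:gonality_monotone}, and then invoke that theorem directly. Write $k = 2\ell$, so that $H_{k,m} = \ci_m(\{1, \ldots, \ell\})$, and assume $k \leq n-1$ so that both graphs are well-defined simple $k$-regular graphs. First I would take $G = H_{k,n+1}$ and check the hypotheses of Theorem \ref{theorem:gonality_monotone}: every circulant graph is vertex-transitive (the shift $v_i \mapsto v_{i+1}$ is an automorphism), $H_{k,n+1}$ is simple and $k$-regular in this range, and since $k$ is even all of its valences are even.

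Next I would delete the vertex $v = v_{n+1}$ and identify its neighbors. Computing cyclic distances modulo $n+1$, the neighbors of $v_{n+1}$ are precisely the $2\ell = k$ vertices $\{v_1, \ldots, v_\ell\} \cup \{v_{n-\ell+1}, \ldots, v_n\}$. I would then specify the pairing that matches the ``low'' neighbor $v_a$ with the ``high'' neighbor $v_{n-\ell+a}$ for $a = 1, \ldots, \ell$, adding an edge across each pair. Since Theorem \ref{theorem:gonality_monotone} allows the neighbors to be paired in \emph{any} way, it suffices to use this particular pairing and then recognize the resulting graph.

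The heart of the argument is verifying that this construction produces exactly $H_{k,n}$ on $v_1, \ldots, v_n$, which I would do by comparing edge sets near the ``seam'' left by the removed vertex. Away from the ends, both $H_{k,n}$ and $H_{k,n+1} - v_{n+1}$ have $v_i \sim v_j$ precisely when $|i-j| \leq \ell$, so they agree there. For the wrap-around edges one computes that $v_a \sim v_b$ (with $a$ small, $b$ large) holds in $H_{k,n+1} - v_{n+1}$ iff $a + (n+1-b) \leq \ell$, whereas in $H_{k,n}$ it holds iff $a + (n-b) \leq \ell$; hence $H_{k,n+1} - v_{n+1}$ is a subgraph of $H_{k,n}$, and the only missing edges are those with $a + (n-b) = \ell$, namely $v_a \sim v_{n-\ell+a}$ for $a = 1, \ldots, \ell$, which is exactly our pairing. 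A quick edge count confirms there are no others: $H_{k,n}$ has $n\ell$ edges while $H_{k,n+1} - v_{n+1}$ has $(n+1)\ell - 2\ell = (n-1)\ell$, a difference of exactly $\ell$; one also checks each added edge $v_a \sim v_{n-\ell+a}$ was absent before (its seam-distance is $\ell+1 > \ell$), so no multi-edges arise. I expect this seam bookkeeping, keeping cyclic distances modulo $n$ and modulo $n+1$ straight, to be the main (though routine) obstacle. With $H_{k,n}$ identified as the output $H$ of Theorem \ref{theorem:gonality_monotone} applied to $G = H_{k,n+1}$, the conclusion $\gon(H_{k,n}) \leq \gon(H_{k,n+1})$ is immediate.
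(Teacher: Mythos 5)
Your proposal is correct and follows essentially the same route as the paper: apply Theorem \ref{theorem:gonality_monotone} to $G=H_{k,n+1}$, delete one vertex, and pair each ``low'' neighbor $v_a$ with the ``high'' neighbor $v_{n-\ell+a}$ (the paper's pairing of vertex $\ell$ with vertex $\ell-(k/2+1)$ modulo $n+1$ is the same pairing in different notation). Your seam bookkeeping and edge count simply spell out the identification of the resulting graph with $H_{k,n}$, which the paper asserts without detail.
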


\begin{proof}
The graph \(H_{k,n+1}\) is vertex transitive with even valence, and applying the construction in Theorem \ref{theorem:gonality_monotone} to it yields \(H_{k,n}\); in particular, if vertex \(0\) is deleted, connect the vertex \(\ell\) to the vertex \(\ell-\left(\frac{k}{2}+1\right)\).  Applying the theorem yields the desired result.
\end{proof}

Thus to prove that \(\gon(H_{k,n})=2\sum_{i=1}^{k/2} i^2\) for all \(n\) greater than or equal to some \(n_0\), it suffices to prove that it holds for \(H_{k,n_0}\).  In the case of $k=4$, this amounts to showing that $\gon(H_{4,n_0})=10$ for some $n_0$. Using such computational tools as \cite{chip_firing_interface}, we can compute $\gon(H_{4,n})$ for all $n$ up to $n=16$, finding $16$ to be the smallest value of $n_0$ such that $\gon(H_{4,n_0})=10$.  This gives us the following result.

\begin{computeraided}
We have
\begin{eqnarray*}
    \begin{aligned}
        \gon(H_{4,n}) = & 
        \begin{cases}
        \displaystyle \floor*{\frac{n}{4}} + \floor*{\frac{n+1}{4}} + 2,  & n < 16 \\
        10,                                                 & n \geq 16
        \end{cases}
    \end{aligned}
\end{eqnarray*} 
\end{computeraided}

To our knowledge this is the first known instance of an infinite family of graph gonalities being determined through meaningfully computer-aided means. To emphasize the power of this method, we briefly remark that the strongest known general lower bound on graph gonality, the scramble number of a graph \cite{scramble}, is not sufficiently powerful to show that $\gon(H_{4,n})=10$ for any $n$. Indeed, using the screewidth of a graph, one can show that $\sn(H_{4,n})\leq 6$ for all $n$.  We leave the details of this to Appendix \ref{appendix}. 

There is no theory-based reason to stop at $k=4$.  For instance, we know that $\gon(H_{6,n})\leq 28$. To prove that $\gon(H_{6,n})= 28$ for all sufficiently large $n$, we must simply find one integer $n_0$ giving $\gon(H_{6,n_0})=28$.  Since we know $\gon(H_{6,n})\leq n-\alpha(H_{6,n})=n-\left\lfloor \frac{n}{4}\right\rfloor$, we certainly need $28\leq n_0-\left\lfloor \frac{n_0}{4}\right\rfloor$, which necessitates $n_0\geq 37$.  In the best case scenario that $\gon(H_{6,37})=28$, verifying this by brute-force would require checking all effective divisors of degree $27$ on a $37$-vertex graph, checking that none have positive rank.  There are $\binom{27+37-1}{27}=\binom{63}{27}=489,462,003,181,042,451$ such divisors.  This could be pared down for instance by taking advantage of the vertex-transitivity of $H_{6,37}$, e.g. by assuming without loss of generality that $D(v_1)\geq D(v_i)$ for all $i$.  Even with such shortcuts, the number of divisors to check remains massive.

More generally, for $H_{k,n}$, the first moment our bound has a hope of being attained is when
\[\frac{k(k+1)(k+2)}{12}\leq n-\left\lfloor \frac{n}{k/2 + 1}\right\rfloor.\]
Thus $n$ will at the very least need to be on the order of $k^3$ for our upper bound to have a hope of being achieved.

\bibliographystyle{plain}

\appendix

\section{Bounding the scramble number of $H_{4,n}$}\label{appendix}
As discussed in Section \ref{section:harary}, other tools fall short of proving that $H_{4,n}$ has gonality $10$ for sufficiently large $n$.  In particular, the strongest known general lower bound on graph gonality, the scramble number of graphs \cite{scramble}, will not suffice.

We briefly recall the definition of scramble number here.  A \emph{scramble} $\mathcal{S}$ on a graph $G$ is a collection $\mathcal{S}=\{E_1,\ldots,E_s\}$ of nonempty subsets of $V(G)$, such that the subgraph induced by each $E_i$ is connected.  The \emph{hitting number} $h(\mathcal{S})$ of a scramble is the minimum size of a set $S$ of vertices such that $S\cap E_i$ is nonempty for all $i$.  The \emph{egg-cut number} $e(\mathcal{S})$ of a scramble is the minimum size of a set $T$ of vertices such that $G-T$ has two connected components, one containing $E_i$ and the other containing $E_j$ for some $i$ and $j$. (If no such set $T$ exists, we set $e(\mathcal{S})=\infty$.)  The \emph{order} of a scramble is then $||\mathcal{S}||=\min\{h(\mathcal{S}),e(\mathcal{S})\}$.  Finally, the scramble number $\sn(G)$ of $G$ is the maximum order of a scramble on the graph.

It was shown in \cite[Theorem 1.1]{scramble} that $\sn(G)\leq \gon(G)$ for all $G$.  Thus to show that $\gon(H_{4,n})\geq 10$ for some $n$, it would suffice to find a scramble $\mathcal{S}$ of order $10$ on $H_{4,n}$.  We will show that no such $n$ exists.

To upper bound $\sn(H_{4,n})$, we utilize the tool of \emph{screewidth}.  This is defined as follows.  A \emph{tree-cut decomposition} $\mathcal{T}=(T,X)$ of a graph $G$ is a tree $T$ (that is, a connected graph with no cycles) together with a function $X:V(G)\rightarrow V(T)$. To avoid confusion between $G$ and $T$, we refer to the vertices of $T$ as \emph{nodes}, and edges of $T$ as \emph{links}. Pictorially, we represent $\mathcal{T}$ as a thickened copy of the tree $T$, with vertices of $G$ drawn within the associated nodes of $T$, and all edges $e=uv$ of $G$ drawn along the unique path within $T$ connecting the node with $u$ to the node with $v$.  For instance, Figure \ref{figure:tree_cut_decomposition} represents a tree-cut decomposition of the Harary graph $H_{4,14}$.

\begin{figure}[hbt]
    \centering
    \includegraphics{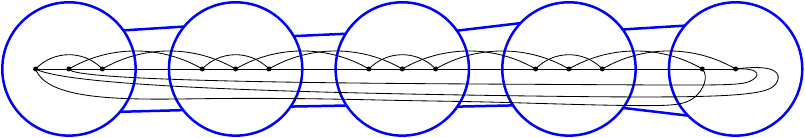}
    \caption{A tree-cut decomposition of $H_{4,14}$}
    \label{figure:tree_cut_decomposition}
\end{figure}
Given a tree-cut decomposition, we compute a number for each link and each node of $T$.  For each link, we count the number of edges of $G$ passing through it. (For our example, this number is $6$ for each link.)  For each node, we count the number of vertices of $G$ within it, and add to this the number of ``tunnelling'' edges from $G$ passing through the node without either endpoint in it. (For our example, this number is $2$ or $3$ for the endpoint nodes, and is $6$ for all others.)  The \emph{width} of $\mathcal{T}$ is the maximum of all these numbers. (So the width of our example is $6$.)  Finally, the \emph{screewidth} of $G$ is the minimum width of any tree-cut decomposition.

It was shown in \cite{screewidth} that $\sn(G)\leq \scw(G)$ for any graph $G$.  This allows us to use screewidth to measure the limitations of scramble number for certain graphs.  In particular, we can use this relationship to prove the following.

\begin{proposition}
    We have $\sn(H_{4,n})\leq 6$.
\end{proposition}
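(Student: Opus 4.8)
The plan is to exhibit a tree-cut decomposition $\mathcal{T}=(T,X)$ of $H_{4,n}=\ci_n(\{1,2\})$ of width at most $6$. Since the appendix records that $\sn(G)\leq\scw(G)$ for every graph $G$ \cite{screewidth}, and $\scw(H_{4,n})$ is the minimum width over all tree-cut decompositions, producing one decomposition of width $\leq 6$ immediately gives $\sn(H_{4,n})\leq\scw(H_{4,n})\leq 6$.

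First I would take $T$ to be a path with nodes $t_1,t_2,\ldots,t_n$ and set $X(v_i)=t_i$, so that the vertices appear in their cyclic order along the path. (Grouping consecutive vertices into nodes of size two also works and matches Figure \ref{figure:tree_cut_decomposition}; singletons are marginally cleaner to analyze.) The edges of $\ci_n(\{1,2\})$ split into the \emph{local} edges $v_iv_{i+1}$ and $v_iv_{i+2}$ with $1\leq i$ and $i+2\leq n$, each drawn across at most two consecutive links, together with the three \emph{wrap-around} edges $v_nv_1$, $v_{n-1}v_1$, and $v_nv_2$, which close up the cycle and must be routed along a long stretch of the acyclic path.

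Next I would tally the width. For the link $\ell_i$ joining $t_i$ and $t_{i+1}$, the only local edges crossing it are $v_iv_{i+1}$, $v_{i-1}v_{i+1}$, and $v_iv_{i+2}$ (when these exist), contributing at most $3$; and at most all three wrap-around edges cross $\ell_i$, contributing at most $3$. Hence every link carries at most $6$ edges, with the bound attained only for interior links. For the node value at $t_i$, I would observe that the only edges tunnelling through $t_i$ are the single local edge $v_{i-1}v_{i+1}$ together with at most the three wrap-around edges, so the node value is at most $1+4=5$. Checking the boundary links $\ell_1,\ell_2,\ell_{n-2},\ell_{n-1}$ and the end nodes separately confirms they are strictly smaller. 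Taking the maximum over all links and nodes then gives width exactly $6$.

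The main obstacle is the cyclic structure: because $T$ is acyclic, the wrap-around edges cannot take a ``short way around'' and instead traverse most of the path, adding to the count of nearly every link and tunnelling through nearly every interior node. The crux of the argument is therefore the bookkeeping that shows these three extra edges, stacked on top of the three local crossing edges, land exactly at the threshold $6$ and never push any link or node above it. One should also verify that for the smallest admissible $n$ the listed wrap-around edges are genuinely distinct from the local edges, after which the same decomposition works uniformly for all $n$, completing the bound $\sn(H_{4,n})\leq 6$.
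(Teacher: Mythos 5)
Your proof is correct and follows essentially the same strategy as the paper: bound the scramble number by the screewidth via a path-shaped tree-cut decomposition that follows the cyclic order, with the three wrap-around edges $v_nv_1$, $v_{n-1}v_1$, $v_nv_2$ accounting for the extra $3$ on each link. The only difference is cosmetic: you use singleton nodes (width $6$ attained at interior links), whereas the paper groups vertices three to a node (width $6$ attained at both links and interior nodes); both yield $\scw(H_{4,n})\leq 6$ and hence $\sn(H_{4,n})\leq 6$.
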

\begin{proof}
    It suffices to give a tree-cut decomposition $\mathcal{T}$ of $H_{4,n}$ of size at most $6$.  Construct a tree $T$ with $\lceil n/3\rceil$ nodes arranged in a path.  Place the vertices $v_1,v_2,v_3$ in the first node, the vertices $v_4,v_5,v_6$ in the second node, and so on, with the final node receiving between $1$ and $3$ vertices.

The only tunneling edges are those connecting $v_{n-1}$ to $v_1$, $v_n$ to $v_1$ and $v_n$ to $v_2$.  Thus the maximum integer associated to any node is $6$ ($3$ for its vertices, $3$ for its tunneling edges).  Similarly, each link has $6$ edges passing through it, $3$ between adjacent vertices in adjacent nodes and the $3$ aforementioned exceptional edges.  Thus $w(\mathcal{T})\leq 6$ (with equality as long as $n\geq 6$).  This completes the proof. 
\end{proof}
\end{document}